\definecolor{pur}{RGB}{186,146,162}
\theoremstyle{definition}
\newtheorem{defn}{\normalfont\scshape Definition}
\newtheorem{prop}{\normalfont\scshape Proposition}
\newtheorem{thm}{\normalfont\scshape Theorem}
\newcommand{\op}{\text{op}}
\newcommand{\Set}{\textbf{Set}}
\newcommand{\CG}{\textit{CG}}
\newcommand{\CW}{\textit{CW}}
\newcommand{\Grpd}{\textit{Grpd}}
\newcommand{\Kan}{\textit{Kan}}
\newcommand{\PShv}{\textit{PShv}}
\newcommand{\s}{\textit{s}}
\newcommand{\Shv}{\textit{Shv}}
\newcommand{\Sm}{\textit{Sm}}
\newcommand{\Spc}{\textit{Spc}}
\newcommand{\Spectra}{\textit{Spectra}}
\newcommand{\sSet}{\textit{sSet}}
\newcommand{\Top}{\textit{Top}}
\newcommand{\A}{\mathbb{A}}
\newcommand{\U}{\mathbb{U}}
\newcommand{\V}{\mathbb{V}}
\newcommand{\W}{\mathbb{W}}
\newcommand{\C}{\mathcal{C}}
\newcommand{\D}{\mathcal{D}}
\renewcommand{\H}{\mathcal{H}}
\newcommand{\M}{\mathcal{M}}
\newcommand{\N}{\mathcal{N}}
\newcommand{\Fun}{\text{Fun}}
\newcommand{\Hom}{\text{Hom}}
\newcommand{\Ho}{\text{Ho}}
\newcommand{\hyp}{\text{hyp}}
\newcommand{\Map}{\text{Map}}
\newcommand{\Nis}{\text{Nis}}
\newcommand{\Zar}{\text{Zar}}
\newcommand{\hatS}{\widehat{\mathcal{S}}}
\renewcommand{\Pr}{\mathcal{P}\hspace{-0.15em}\textit{r}}
\title{Constructing The Unstable Motivic Homotopy Category Using $(\infty,1)$-Categories}
\author{Thomas Brazelton}
\date{2018}
\begin{document}
\maketitle

\begin{abstract}
\noindent In this expository paper, we provide an overview of the construction of the unstable motivic homotopy category $\H(S)$ over a base scheme $S$. We first provide a modern construction using the theory of model categories and Bousfield localization. From the perspective that each model category has an underlying $(\infty,1)$-category, we proceed to construct $\H(S)$ in two ways, one from the perspective taken in a paper by Bachmann and Hoyois \cite{BH17}, and another from the universal algebra perspective of Robalo \cite{R15}. We finally conclude by demonstrating that these two constructions exhibit an equivalence of $(\infty,1)$-categories.
\end{abstract}

\section*{Acknowledgements}
Thank you to Emily Riehl for supervising this expository paper, and to Kirsten Wickelgren for suggesting the topic and providing me with references and support. Additionally, thank you to Tom Bachmann for his helpful correspondence. Finally, thank you to all the references I have cited, particularly \cite{AE16}, whose exposition I followed closely. All of the writing here is expository, so any mistakes are my own, and any credit should be attributed to the authors cited.

\section{Introduction}
Voevodsky \cite{V98} draws a portrait of the abstract theory of (stable) homotopy as follows:\\

We begin with a category $\C$ in which we would like to study homotopy. A priori, $\C$ need not have any nice properties that we desire in a category, so we usually pass to a category $\Spc$ of ``spaces,'' which contains $\C$ and has internal-homs, as well as small limits and colimits. Now we define a class of weak equivalences in $\Spc$. We choose classes of cofibrations and fibrations in order to get a model structure on this category, and obtain the \textit{(unstable) homotopy category} $H$. This category should be pointed, i.e. have a zero object, so that we can talk about suspension functors in a meaningful way. If we were to stabilize with respect to these suspension functors, which satisfy some nice natural properties, we would obtain a category which is additive and triangulated, but lacks infinite coproducts. In order to avoid this obstruction, we utilize the theory of spectra for our inversion. This produces a category called the \textit{stable homotopy category} $SH$. These constructions are summarized in the following diagram, where each map universally adjoins the structure described in the paragraph above
\[
\C \to \Spc \to H \to SH.
\]

As an example, we could begin with $\CW$, the category of CW-complexes, and obtain
\[
\CW \to \CG \to \Ho(\Top_\ast) \to \Ho(\Spectra),
\]

where $\CG$ is the category of compactly generated spaces.\\

This paper will discuss the approaches to develop the homotopy category in the context of $\A^1$-homotopy theory. We will neglect to discuss stabilization, and will instead focus on the particular constructions of the unstable homotopy category, both utilizing model categories and, in a more modern fashion, using $(\infty,1)$-categories.\\

In our setting, we would like to begin with a category of smooth schemes $\Sm/S$ over a base scheme $S$, and ``do homotopy theory.'' In this setting, we would like the affine line $\A^1$ to play the role of the interval, and we would like the following properties to hold:

\begin{enumerate}
\item $\A^1 \times X \to X$ should be an equivalence (that is, $\A^1$ should be contractible)
\item If $X \in \Sm/S$, and $U,V\subseteq X$ are open subschemes such that $U\cup V = X$ in $\Sm/S$, then this should remain true as we pass to the new categories we are creating.
\end{enumerate}

These two key ideas ($\A^1$-invariance and Nisnevich descent) are what we should bear in mind when attempting to construct any meaningful category in which to do homotopy on schemes.\\

Immediately we see that the category $\Sm/S$ does not admit all colimits. This issue can be rectified in a number of ways. One idea is that we could adjoin other varieties, including nonsmooth ones, and extend this category somehow. However we want all colimits, not only finite ones, which is not possible through this construction. The next idea is to look at the category
\[
\PShv(\Sm/S)
\]
of $\Set$-valued presheaves on $\Sm/S$, and consider its full subcategory $\Shv_\Nis (\Sm/S)$ of sheaves in the Nisnevich topology. This category has all small limits and colimits, and thus becomes our category $\Spc$ of spaces. We can then define a model structure and proceed to lay the groundwork for $\A^1$-homotopy theory as in \cite{MV99,V98}.\\

One final idea is to proceed as Dugger does in \cite[\S~8]{D01}, and expand the category $\Sm/S$ by adjoining homotopy colimits subject to certain relations. This yields a model category which is Quillen equivalent to the one produced in \cite{MV99}.\\

However a more modern perspective notes that a model category is a presentation of a more fundamental object: namely an $(\infty,1)$-category. This tells us that we might be better served to focus on the associated $(\infty,1)$-categories, rather than model categories, as the fundamental objects to localize.\\

Formally speaking, for any simplicial model category $\M$, we may form the subcategory of fibrant-cofibrant objects $\M^\circ$, which is enriched over Kan-complexes. We then take its homotopy coherent nerve $N(\M^\circ)$, and call this the $(\infty,1)$-category \textit{presented by $A$}.\\

In Section \ref{background} we recall the theory of model categories, Bousfield localizations, and $(\infty,1)$-categories. In Section \ref{hs-model-cat}, we present the construction of $\M_{\A^1}$, the model category of Morel-Voevodksy spaces, as in \cite{R15}. Its underlying $(\infty,1)$-category is the unstable motivic homotopy category $\H(S)$, which we construct in Section \ref{hs-infty}. In Section \ref{hs-bh}, we present the construction of $\H(S)$ as found in \cite{BH17}. In Section \ref{hs-r}, we present a construction using universal algebra from \cite{R15}. Finally, in Section \ref{hs-equiv}, we demonstrate that these constructions provide an equivalent definition for the unstable homotopy category $\H(S)$.

\section{Background}\label{background}
\subsection{Model, Simplicial, and Combinatorial Categories}
\begin{defn} Let $\M$ be a category. A \textit{model structure} on $\M$ consists of three classes of morphisms $W$ (weak equivalences), $C$ (cofibrations), and $F$ (fibrations) such that
\begin{enumerate}
\item[\textbf{M1}] The class $W$ satisfies the ``two-out-of-three'' property: if any two of $f$, $g$, $g\circ f$ are weak equivalences, then so is the third.
\item[\textbf{M2}] Each class is closed under retracts.
\item[\textbf{M3}] For a diagram of the form \begin{center}\begin{tikzcd}
A\rar\dar["i" left] & E\dar["p" right]\\
X\rar\ar[ur,dashed] & B
\end{tikzcd}\end{center}
if $p\in W\cap F$ (it is an \textit{acyclic fibration}) and $i\in C$, then a lift $X\to E$ may be found to make the diagram commute. Alternatively if $i\in W\cap C$ (it is an \textit{acyclic cofibration}) and $p\in F$ then such a lift exists.
\item[\textbf{M4}] Any map in $\M$ admits factorization systems of the form $(W\cap C, F)$ and $(C,W\cap F)$. This means that any morphism can be expressed as the composition of an acyclic cofibration and a fibration, or as a cofibration and an acyclic fibration.
\end{enumerate}
\end{defn}

\begin{defn} A \textit{model category} is a category with a model structure, as well as all small limits and colimits.
\end{defn}

In particular, the presence of small limits and colimits means that any model category $\M$ has an initial object and a terminal object. If these two objects are isomorphic, then we say that $\M$ is \textit{pointed}.\\

Given a model category $\M$, since we have already labeled a class of weak equivalences $W$, we know which maps to formally invert. We thus define the localization as $\Ho(\M) = \M[W^{-1}]$. This comes with a functor $L:\M \to \Ho(\M)$ which is universal among functors that take weak equivalences to isomorphisms. Additionally for any other category $\N$ the map $\Fun(\Ho(\M),\N) \xrightarrow{-\circ L} \Fun(\M,\N)$ is fully faithful.\\

\textsc{A Note On Inversion:} For an locally small category $\M$ with a class of maps $W$, the formal inversion $\M[W^{-1}]$ may not even be a locally small category. Attempting to form the hom-\textit{sets} $\Hom_{\Ho(\M)}(X,Y)$ may require us to transition into a larger universe in order to form a category. It is a theorem of Quillen that if $\M$ is a model category and $W$ is the class of weak equivalences, then $\M[W^{-1}]$ is in fact a category. When we discuss inversion of $(\infty,1)$-categories, this issue of size will be important to remember.\\ 

For certain model categories, we can localize with respect to a set of maps $I$ of $\M$ (not among the weak equivalences), and moreover we can do so in such a way that highlights a model structure that is ``local'' with respect to this set $I$. We will see this in Section \ref{bousfield}, but in order to make sense of this, we must first develop some more machinery.\\

\begin{defn} A category $\M$ is a (tensored and cotensored) \textit{simplicial category} if
\begin{enumerate}
\item We have mapping spaces which are simplicial sets. That is, we can define $\Map_\M(X,Y) \in \sSet$ for every pair $X,Y\in\M$.
\item We have an action of $\sSet$ on $\M$, denoted $\otimes$.
\item We have an exponential $X^S \in \M$ for every $X\in\M$ and $S\in\sSet$.
\end{enumerate}
With the further conditions that $-\otimes X \dashv \Map_\M(X,-)$ forms a two-variable adjunction of $X^S$ for each $X\in\M$, and that $\Hom_\M (X,Y) \cong \Map_\M(X,Y)_0$.

\end{defn}

\begin{defn}
If $\M$ is a model category, which is also a simplicial category, we say that it is a \textit{simplicial model category} if, for a pullback diagram of the following form
\begin{center}
\begin{tikzcd}
\Map_\M(X,E)\ar[dr,dashed, two heads, "g" above] \ar[drr, "i\circ -" above right, bend left=10]\ar[ddr,"-\circ p" below left, bend right=15]& & \\
& \Map_\M (A,E)\times_{\Map_\M (A,B)} \Map_\M (X,B) \rar \dar  & \Map_\M(A,E)\dar["-\circ p" right]\\
& \Map_\M(X,B)\rar["i\circ -" below] & \Map_\M(A,B)\arrow[ul, phantom, "\lrcorner", near end]\\
\end{tikzcd}
\end{center}
where $i:A\to X$ is a cofibration and $p:E\to B$ is a fibration, then the induced map $g$ is a fibration in $\sSet$ and is moreover a weak equivalence if either of $i$ or $p$ is.
\end{defn}

\begin{defn}\cite[Prop~1.1.5.10]{HTT} If $\M$ is a simplicial model category, then the subcategory $\M^\circ$ of fibrant-cofibrant objects is a simplicial category, and its nerve $N(\M^\circ)$ is an $(\infty,1)$-category, referred to as the \textit{underlying $(\infty,1)$-category of $\M$}.
\end{defn}

\subsection{Bousfield Localization}\label{bousfield}
For an adjoint pair of functors $F: \M\rightleftarrows \N:G$ between model categories, we call this a \textit{Quillen pair} if either of the following equivalent conditions hold:
\begin{enumerate}
\item $F$ preserves cofibrations and acyclic cofibrations
\item $G$ preserves fibrations and acyclic fibrations
\end{enumerate}

A Quillen pair induces functors $LF:\M\to \Ho(\N)$ and $RG:\N\to \Ho(\M)$, called the left and right derived functors respectively, which map weak equivalences to isomorphisms. Moreover, this induces an adjunction
\[
\Ho(\M) \leftrightarrows \Ho(\N).
\]

If the adjunction of derived functors $LF\dashv RG$ is additionally an equivalence of categories, then we say that $F\dashv G$ is a \textit{Quillen equivalence}.\\

\begin{defn}
Let $I$ be a set of maps in a simplicial model category $\M$. We say $X\in\M$ is \textit{$I$-local} if it is fibrant and for any $i:A\to B$ in $I$, we have that the induced morphism $\Map_{\M} (B,X) \to \Map_\M (A,X)$ is a weak homotopy equivalence in $\sSet$.\\

We say that a morphism $f:Y\to Z$ is an \textit{$I$-local weak equivalence} if for every $I$-local object $X$ the induced morphism $f^\ast : \Map_\M(Z,X) \to \Map_\M(Y,X)$ is a weak equivalence. We call this class of maps $W_I$, and note that $W \subset W_I$.\\

We define $F_I$ to be the class of maps satisfying the right lifting property with respect to acyclic cofibrations of the form $W_I \cap C$. Then $(W_I, C, F_I)$ forms a model structure on $\M$. We then define $L_I \M$ to be the category $\M$ with this distinguished model structure, and we call this a \textit{left Bousfield localization} of $\M$ with respect to $I$.
\end{defn}

We will fly through a couple definitions here. We define a \textit{combinatorial category} to be a category which is ``locally presentable'' and ``cofibrantly generated,'' although we will neglect to specify what either of those are. Additionally we define a model category to be \textit{left proper} if pushouts of weak equivalences along cofibrations are weak equivalences. 

\begin{thm}{\cite[A.3.7.3]{HTT}} \label{thm:LurA373} If $\M$ is a left proper combinatorial simplicial model category and $I$ is a set of morphisms in $\M$, then the left Bousfield localization $L_I \M$ exists and inherits a simplicial model structure from $\M$. Moreover the fibrant objects of $L_I \M$ are precisely the $I$-local objects of $\M$.
\end{thm}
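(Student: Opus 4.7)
The plan is to invoke Jeff Smith's recognition theorem for combinatorial model structures, which says that given an underlying combinatorial category $\M$ together with (i) a set $C_0$ of generating cofibrations, (ii) a class $W$ satisfying two-out-of-three and closed under retracts which is an accessible subcategory of the arrow category $\M^{[1]}$, and (iii) the joint conditions that $C_0$-injectives lie in $W$ and $W \cap C_0\text{-cof}$ is closed under transfinite pushouts, one obtains a cofibrantly generated model structure on $\M$ with weak equivalences $W$ and cofibrations $C_0\text{-cof}$. The candidates I would feed into Smith's theorem are the original generating cofibrations $C_0$ of $\M$ and the class $W_I$ of $I$-local equivalences.

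First I would verify the elementary properties of $W_I$: the two-out-of-three property and retract closure are both immediate from the definition, since in each case one reduces to the analogous property for weak equivalences of simplicial sets in each mapping space $\Map_\M(-,X)$ with $X$ ranging over $I$-local objects. The more substantive check is accessibility of $W_I$ as a subcategory of $\M^{[1]}$; this is where combinatoriality of $\M$ is essential, since one uses that $\M$ is locally presentable to see that ``being $I$-local'' and hence ``being an $I$-local equivalence'' are conditions cut out by an accessible pullback of accessible categories (one represents each mapping space functorially by cosimplicial resolutions, and uses that $I$ is a \emph{set}). Next I would observe that $W \subseteq W_I$ automatically, so $C_0$-injectives (which are acyclic fibrations in $\M$) are in particular $I$-local equivalences.

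The main obstacle is producing a set $J_I$ of generating acyclic cofibrations whose saturation is exactly $W_I \cap C_0\text{-cof}$, and simultaneously verifying the closure condition in Smith's theorem. The standard trick is to augment the original generating acyclic cofibrations of $\M$ with a set of ``horns'' built from $I$ by forming pushout-product cofibrations of each $i \in I$ with the boundary inclusions $\partial \Delta^n \hookrightarrow \Delta^n$ (after first cofibrantly replacing $I$ if necessary). An object $X$ has the right lifting property against this augmented set precisely when $X$ is fibrant in $\M$ \emph{and} each $\Map_\M(B,X) \to \Map_\M(A,X)$ for $i\colon A\to B$ in $I$ is a trivial Kan fibration, which unwinds exactly to $X$ being $I$-local. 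Left properness of $\M$ enters to ensure that pushouts of $J_I$-cell maps remain $I$-local equivalences: this is the step where one must check, via a ladder argument on relative cell complexes combined with left properness, that transfinite pushouts of the new horn maps land in $W_I$.

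Finally, the simplicial model structure is inherited because both $C_0$ and $J_I$ consist of maps whose pushout-products with boundary inclusions of simplices remain in the respective classes (this is automatic for $C_0$ by the simplicial structure on $\M$, and was built in by construction for $J_I$). The fibrancy characterization falls out of the construction: fibrations in $L_I\M$ with target $\ast$ are precisely maps having the right lifting property against $J_I\text{-cof}$, which as noted is equivalent to $I$-locality. The most delicate step to justify carefully, and the one I expect to require the most work, is the accessibility of $W_I$, since without combinatoriality and the set-theoretic hypothesis on $I$ one cannot guarantee that $J_I$ exists as a set at all.
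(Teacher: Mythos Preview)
The paper does not actually prove this theorem; it is stated with a citation to \cite[A.3.7.3]{HTT} and used as a black box. So there is no ``paper's own proof'' to compare against.

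That said, your outline is a correct and standard strategy, essentially the one Lurie (and before him Smith, Barwick, and Hirschhorn in the cellular setting) uses. A couple of small comments on the write-up: you phrase the goal as finding $J_I$ whose saturation is \emph{exactly} $W_I \cap C_0\text{-cof}$, but Smith's theorem does not require this as input --- it only needs conditions (i)--(iv) as you list them, and the existence of a generating set of acyclic cofibrations is part of the \emph{conclusion}. The horn set $J_I$ you build is used to verify condition (iv) (closure of $W_I \cap C_0\text{-cof}$ under pushouts and transfinite composition) and, after the fact, to identify the fibrant objects; it need not generate all acyclic cofibrations on the nose. Also, when you say RLP against $J_I$ forces $\Map_\M(B,X)\to\Map_\M(A,X)$ to be a \emph{trivial} Kan fibration, note this relies on having already replaced each $i\in I$ by a cofibration between cofibrant objects so that SM7 makes the map a fibration to begin with --- you mention cofibrant replacement parenthetically, but it is load-bearing here. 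Otherwise the sketch is sound, and you have correctly flagged accessibility of $W_I$ as the place where combinatoriality and the set-size hypothesis on $I$ do real work.
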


These definitions shouldn't add too much additional structure, in fact we have:
\begin{prop}\cite[6.4]{D01}
Any combinatorial model category is Quillen equivalent to one which is both simplicial and left proper.
\end{prop}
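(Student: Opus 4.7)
The plan is to invoke Dugger's presentation theorem \cite[Thm.~1.1]{D01}, which asserts that every combinatorial model category $\M$ is Quillen equivalent to a left Bousfield localization of $\Fun(\C^{\op}, \sSet)$ with its projective model structure, for some small category $\C$. Once this is in hand, it suffices to verify that both projective simplicial presheaves and their left Bousfield localizations are simplicial and left proper.

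First I would check that for any small category $\C$, the model category $\Fun(\C^{\op}, \sSet)_{\text{proj}}$ is simplicial, left proper, and combinatorial. The simplicial enrichment is inherited pointwise from $\sSet$, providing mapping spaces, tensors, and cotensors automatically; combinatoriality is standard; and left properness follows pointwise from the left properness of $\sSet$, since projective cofibrations and weak equivalences are detected objectwise.

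The real content of the argument is Dugger's construction of the presentation. Starting from a small set of ``homotopy generators'' for $\M$, one organizes them into a small category $\C$ and uses the universal property of presheaves to build a Quillen adjunction $\Fun(\C^{\op}, \sSet)_{\text{proj}} \rightleftarrows \M$. The main challenge is then to produce a set $S$ of morphisms in $\Fun(\C^{\op}, \sSet)_{\text{proj}}$ such that the left Bousfield localization $L_S \Fun(\C^{\op}, \sSet)_{\text{proj}}$ is Quillen equivalent to $\M$; this is the most technical step, and I would defer to \cite{D01} for the explicit construction of $S$.

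Finally I would invoke Theorem \ref{thm:LurA373} to conclude that $L_S \Fun(\C^{\op}, \sSet)_{\text{proj}}$ remains a simplicial model category, and observe that left Bousfield localization preserves left properness because the cofibrations are unchanged and the class of weak equivalences only enlarges, which allows one to reduce a local statement to the unlocalized setting. Composing these facts with Dugger's Quillen equivalence produces a simplicial, left proper model category Quillen equivalent to $\M$, as required. The hard part is thus entirely isolated in Dugger's presentation theorem, where the cleverness lies in choosing $S$ so that $L_S \Fun(\C^{\op}, \sSet)_{\text{proj}}$ precisely captures the homotopy theory of $\M$.
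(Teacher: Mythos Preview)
The paper does not give its own proof of this proposition; it merely records the statement with a citation to \cite[6.4]{D01} and moves on. Your outline is correct and is essentially Dugger's own argument: present $\M$ as a left Bousfield localization of a projective simplicial presheaf category, then observe that the latter is simplicial and left proper and that these properties survive localization. So there is nothing to compare against in this paper, and your sketch faithfully reproduces the content of the cited reference.
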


\subsection{From Model Categories to $(\infty,1)$-Categories}\label{loc-pres-cat}

Much of the passage to $(\infty,1)$-categories here arises as a result of developments in the theory of \textit{locally presentable} $(\infty,1)$-categories. We provide a very light introduction to this theory here, in order to illuminate the connection between the model categories we will work with in Section \ref{hs-model-cat} and the $(\infty,1)$-categories that we encounter later on.

\begin{defn} An $(\infty,1)$-category $\mathcal{C}$ is \textit{locally presentable} if there exists a combinatorial simplicial model category $A$ such that $\C\simeq N(A^\circ)$.
\end{defn}

\begin{defn} Let $\C$ be an essentially small category. Then $\s\PShv(\C) := [\C^\op , \sSet]$ is a functor category called the \textit{category of simplicial presheaves on $\C$}. 
\end{defn}

\begin{thm}{[Dugger's Theorem]} Every combinatorial model category $A$ is Quillen equivalent to the left Bousfield localization of $\s\PShv(K)_{proj}$, with the projective model structure, for some small category $K$. That is,
\[
L_S \s\PShv(K)_{proj} \xrightarrow{\cong} A.
\]
\end{thm}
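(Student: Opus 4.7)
The plan is to produce an explicit Quillen adjunction $\Re : \s\PShv(K)_{proj} \rightleftarrows A : \mathrm{Sing}$, and then localize the left-hand side at precisely the class of maps that become weak equivalences after applying $\Re$, thereby turning the adjunction into a Quillen equivalence.

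First I would choose the small category $K$. Since $A$ is combinatorial it is in particular locally presentable, so it admits a (small) set of $\lambda$-compact generators. I would let $K \subseteq A$ be a small full subcategory containing cofibrant representatives of these generators (replacing by cofibrant replacements where necessary); the point is that $K$ should be \emph{homotopically generating}, meaning every object of $A$ is a homotopy colimit of objects of $K$. Once $K$ is fixed, the Yoneda embedding $y : K \hookrightarrow \s\PShv(K)$ together with the inclusion $\iota : K \hookrightarrow A$ produces, by left Kan extension, a colimit-preserving functor $\Re := \mathrm{Lan}_y \iota : \s\PShv(K) \to A$. Its right adjoint is $\mathrm{Sing}(a)(k) = \Map_A(k,a)$, using the simplicial enrichment of $A$ (which we may assume exists by the proposition cited just above, since combinatorial model categories are Quillen equivalent to simplicial ones).

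Next I would verify that $\Re \dashv \mathrm{Sing}$ is a Quillen adjunction with respect to the projective model structure on $\s\PShv(K)$. The projective structure has generating cofibrations $\{k\otimes \partial\Delta^n \hookrightarrow k\otimes \Delta^n\}_{k\in K, n\geq 0}$, and $\Re$ sends these to morphisms that are cofibrations in $A$ because each $k\in K$ is cofibrant and $A$ is simplicial; a dual check on acyclic fibrations gives the Quillen pair. Now let $S$ be the set of maps in $\s\PShv(K)_{proj}$ that $\Re$ sends to weak equivalences in $A$; more honestly, one shows there is a small \emph{set} $S_0$ of such maps whose $I$-local objects coincide with the $S$-local ones (this uses combinatoriality and Theorem \ref{thm:LurA373} to guarantee that $L_{S_0} \s\PShv(K)_{proj}$ exists as a combinatorial simplicial model category).

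Finally, I would upgrade the Quillen pair on the localized category $L_{S_0}\s\PShv(K)_{proj} \rightleftarrows A$ to a Quillen equivalence. By construction $\mathrm{Sing}$ lands in $S_0$-local objects, so the adjunction descends to the localization; to check the equivalence it suffices to show that the unit $F \to \mathrm{Sing}\,\Re(F)^{\mathrm{fib}}$ is an $S_0$-local weak equivalence for cofibrant $F$, and that the counit $\Re\,\mathrm{Sing}(a)^{\mathrm{cof}} \to a$ is a weak equivalence in $A$ for fibrant $a$. The counit statement reduces to the claim that every object of $A$ is the homotopy colimit of its canonical diagram of maps out of objects of $K$, which is exactly the homotopical generation we ensured when choosing $K$; the unit statement then follows formally from the counit plus the fact that $\Re$ takes representable presheaves to objects of $K$. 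The main obstacle, and the reason this result is nontrivial, is precisely this last step: producing a small set $S_0$ rather than a proper class, and then verifying that homotopical generation by $K$ really gives the counit equivalence on a model-categorical (not merely homotopy-categorical) level. Both points rely essentially on the combinatoriality hypothesis and on a careful cofibrant/fibrant replacement argument in $L_{S_0}\s\PShv(K)_{proj}$.
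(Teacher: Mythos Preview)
The paper does not actually prove Dugger's Theorem; it is stated as a black box and attributed to \cite{D01}, with no argument given. There is therefore nothing to compare your proposal against within this paper itself.

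That said, your sketch follows the strategy of Dugger's original proof quite faithfully: take a small full subcategory $K\subseteq A$ of cofibrant homotopy generators, build the realization/singular adjunction $\Re\dashv\mathrm{Sing}$ by left Kan extension along Yoneda, and then Bousfield localize $\s\PShv(K)_{proj}$ at a set of maps inverted by $\Re$. You correctly identify the two genuine technical points---finding a small \emph{set} $S_0$ rather than the proper class of all $\Re$-equivalences, and verifying the counit is a weak equivalence using the homotopical generation property of $K$. One small remark: you assume $A$ is simplicial by invoking the proposition that every combinatorial model category is Quillen equivalent to a simplicial one, but note that this proposition is itself a consequence of Dugger's theorem (indeed it is \cite[6.4]{D01}), so care is needed to avoid circularity; Dugger's actual argument handles the non-simplicial case directly via cosimplicial resolutions (framings) to define $\Map_A(k,a)$.
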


So we can think of every locally presentable $(\infty,1)$-category as presented by a reflective subcategory of a category of simplicial sheaves. In particular, we have the following:

\begin{prop} For a simplicial set $\D$ (in particular a quasi-category), the category of \textit{$(\infty,1)$-presheaves} on $\D$ is given as
\[
\PShv(\D) := \Fun(\D^\op, \infty\text{-}\Grpd),
\]
where $\infty$-$\Grpd$ is considered as an $(\infty,1)$-category. We have that $\PShv(\D)$ is locally presentable, and is in fact presented by a global (injective or projective) model structure on simplicial presheaves.
\end{prop}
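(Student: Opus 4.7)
The plan is to exhibit a specific combinatorial simplicial model category whose underlying $(\infty,1)$-category (in the sense of Proposition 1.1.5.10 cited earlier) is equivalent to $\Fun(\D^\op, \infty\text{-}\Grpd)$; by the definition of local presentability recalled above, this would establish both claims at once.

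First I would reduce to the case where $\D$ is a quasi-category, since any simplicial set is Joyal-equivalent to one, and then pass to the rigidification $\mathfrak{C}[\D]$, the left adjoint to the homotopy coherent nerve, which is a small simplicially-enriched category. This is a legitimate maneuver because $\Fun(-, \infty\text{-}\Grpd)$ inverts Joyal equivalences of small simplicial sets, and because the counit $N(\mathfrak{C}[\D]) \to \D$ is a weak equivalence for fibrant $\D$. Now the diagram category $\s\PShv(\mathfrak{C}[\D]) = [\mathfrak{C}[\D]^\op, \sSet]$, with $\sSet$ carrying the Kan--Quillen model structure, admits both an injective and a projective model structure, each of which is combinatorial, simplicial, and left proper by standard results on model categories of diagrams.

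Next I would identify the underlying $(\infty,1)$-category of this model category with $\Fun(\D^\op, \infty\text{-}\Grpd)$. The subcategory of fibrant-cofibrant objects, under either model structure, consists of simplicially-enriched functors from $\mathfrak{C}[\D]^\op$ to the simplicial category $\Kan$ of Kan complexes, satisfying the appropriate (co)fibrancy condition. Taking the homotopy coherent nerve and invoking Lurie's rectification theorem (HTT 4.2.4.4), this $(\infty,1)$-category is equivalent to the genuine functor quasi-category $\Fun(N(\mathfrak{C}[\D])^\op, N(\Kan)) \simeq \Fun(\D^\op, \infty\text{-}\Grpd)$, since $N(\Kan)$ is the standard model of $\infty\text{-}\Grpd$. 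Combining these two steps, we have presented $\PShv(\D)$ by a combinatorial simplicial model category, so it is locally presentable.

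The main obstacle is the rectification step: bridging the gap between strict simplicially-enriched functors out of $\mathfrak{C}[\D]^\op$ and homotopy-coherent $(\infty,1)$-functors out of $\D^\op$. While the model-categorical input (the existence and good properties of the projective and injective structures on $[\mathfrak{C}[\D]^\op, \sSet]$) is routine, the identification of its underlying $(\infty,1)$-category with the $(\infty,1)$-functor category genuinely requires Lurie's rectification machinery, since a priori these two objects are constructed by very different universal properties. Once that comparison is granted, local presentability and the explicit presentation follow immediately from unraveling the definitions.
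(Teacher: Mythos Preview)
The paper does not actually supply a proof of this proposition; it is stated as a background fact imported from the literature (implicitly from \cite{HTT}), and indeed the paper later invokes precisely \cite[Prop.~4.2.4.4]{HTT} in Section~\ref{hs-r} to make the identification $\PShv(N(\Sm^{ft}/S)) \cong N((\s\PShv(\Sm^{ft}/S))^\circ)$. Your proposal is correct and is exactly the standard argument underlying that citation: one exhibits the projective or injective model structure on $[\mathfrak{C}[\D]^\op,\sSet]$ as a combinatorial simplicial model category (this is \cite[A.2.8.2, A.3.3.2]{HTT}), and then the comparison with $\Fun(\D^\op,\infty\text{-}\Grpd)$ is precisely \cite[4.2.4.4]{HTT}.

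Two small remarks. First, the preliminary reduction to $\D$ a quasi-category is unnecessary: both the rigidification $\mathfrak{C}[-]$ and the statement of \cite[4.2.4.4]{HTT} apply to an arbitrary small simplicial set, so you may as well work with $\D$ directly. Second, since $\mathfrak{C}$ is the \emph{left} adjoint to $N$, the map $N(\mathfrak{C}[\D])\to\D$ you mention is the unit (read in the wrong direction), not the counit; in any case both unit and counit are weak equivalences here, so this slip is harmless.
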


This leads us to the following general idea: simplicial presheaves are a model for $(\infty,1)$-presheaves.

\section{Unstable Motivic Homotopy Theory Using Model Categories}\label{hs-model-cat}

In this section, we will assume $S$ is a quasi-compact and quasi-separated scheme. We have then that the category $\Sm/S$ of smooth schemes over $S$ is essentially small.\\

The Nisnevich topology is a topology that we may place on the category $\Sm/S$ which is finer than the Zariski topology, but coarser than the \'etale topology. Moreover for any full subcategory $\C \subseteq \Sm/S$, the identity functor induces morphisms of sites
\[
\C_{\text{\'et}} \to \C_{\Nis} \to \C_{\Zar}.
\]

\begin{prop}{\cite[Proposition~3.32]{AE16}} The category $\s\PShv(\Sm/S)$ has a model structure with objectwise weak equivalences (in $\sSet$), objectwise fibrations (also in $\sSet$), and ``projective cofibrations,'' meaning that they have left lifting with respect to acyclic objectwise fibrations. This model structure is called the \textit{projective model structure}, and the category $\s\PShv(\Sm/S)$ is moreover a left proper combinatorial simplicial model category.
\end{prop}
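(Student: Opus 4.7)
The plan is to recognize this as a particular case of the general theorem that for a small category $\C$ and a cofibrantly generated model category $\mathcal{V}$, the functor category $\Fun(\C^\op,\mathcal{V})$ admits a projective model structure whose (acyclic) fibrations and weak equivalences are detected pointwise. Setting $\C=\Sm/S$ (which is essentially small by the quasi-compact, quasi-separated hypothesis) and $\mathcal{V}=\sSet$ with the Kan--Quillen model structure produces the desired structure on $\s\PShv(\Sm/S)$. I would first unpack this by exhibiting explicit generating sets: writing $I_{\sSet}=\{\partial\Delta^n\hookrightarrow\Delta^n\}$ and $J_{\sSet}=\{\Lambda^n_k\hookrightarrow\Delta^n\}$, one takes
\[
I=\{\Hom(-,X)\otimes i : X\in\Sm/S,\; i\in I_{\sSet}\},\qquad J=\{\Hom(-,X)\otimes j : X\in\Sm/S,\; j\in J_{\sSet}\},
\]
and verifies the hypotheses of Kan's recognition theorem: the domains are small (by essential smallness of $\Sm/S$ and smallness of finite simplicial sets), $J$-cell complexes are pointwise acyclic cofibrations, and the class cofibrantly generated by $J$ lies within the intersection of $I$-cofibrations and pointwise weak equivalences. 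These follow from the analogous facts for $\sSet$ together with the Yoneda lemma, since evaluation at any $Y\in\Sm/S$ commutes with the relevant (co)limits.

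For the \emph{combinatorial} claim, local presentability of $\s\PShv(\Sm/S)=\Fun((\Sm/S)^\op,\sSet)$ follows from the standard fact that the category of functors from a small category into a locally presentable category (such as $\sSet$) is again locally presentable; cofibrant generation is witnessed by the sets $I$ and $J$ above.

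For the \emph{simplicial} structure, I would define all pieces pointwise: the mapping space is $\Map(F,G)_n=\Hom_{\s\PShv}(F\times\Delta^n,G)$, the tensor is $(F\otimes K)(X)=F(X)\times K$, and the cotensor is $(F^K)(X)=F(X)^K$. The two-variable adjunction is immediate from the corresponding one in $\sSet$ evaluated objectwise. The pushout-product axiom SM7 reduces, upon pointwise evaluation at each $X\in\Sm/S$, to SM7 in $\sSet$; one checks it on generators $I\square I$ and $I\square J$ using that projective (acyclic) cofibrations pushout-producted with cofibrations of $\sSet$ remain projective (acyclic) cofibrations.

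The main technical point—and the one I would treat most carefully—is \emph{left properness}. The key observation is that every generator in $I$ is a pointwise monomorphism (since at each $Y\in\Sm/S$ it becomes a disjoint union of copies of a generating cofibration of $\sSet$). Since pointwise cofibrations of simplicial sets are closed under cobase change, transfinite composition, and retracts, every projective cofibration is a pointwise cofibration. Given a pushout square in $\s\PShv(\Sm/S)$ with a projective cofibration $A\to B$ and a pointwise weak equivalence $A\to X$, evaluation at each $Y$ produces a pushout in $\sSet$ of a weak equivalence along a cofibration; left properness of $\sSet$ then yields that the induced map $X\to B\cup_A X$ is a pointwise weak equivalence, hence a weak equivalence in the projective structure. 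This completes the argument.
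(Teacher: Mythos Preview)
The paper does not actually prove this proposition; it simply records the statement with a citation to \cite[Proposition~3.32]{AE16} and proceeds. Your outline is correct and is essentially the standard argument one finds in the cited literature: instantiate the general existence theorem for projective model structures on diagram categories $\Fun(\C^{\op},\mathcal{V})$ with $\mathcal{V}=\sSet$, exhibit the generating sets $I,J$ via representables tensored with the Kan--Quillen generators, deduce local presentability from that of $\sSet$, verify the simplicial enrichment and SM7 pointwise, and obtain left properness from the observation that every projective cofibration is an objectwise monomorphism together with left properness of $\sSet$. There is nothing to compare against in the paper itself, so your write-up in fact supplies strictly more than the paper does.
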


We can now take the Bousfield localization of the site $\s\PShv(\Sm/S)$ with respect to \textit{Nisnevich hypercovers}, and obtain the \textit{Nisnevich-local model category}, denoted:
\[
L_{\Nis} \s\PShv(\Sm/S).
\]

This inherits a left proper combinatorial simplicial structure from $\s\PShv(\Sm/S)$, and is a model category by the theory of Bousfield localization.\\

We also note that the fibrant objects here are those objects which take values in Kan-complexes and satisfy Nisnevich descent.\\

However we cannot pass to the homotopy category just yet, since we are missing a crucial detail. We haven't guaranteed that maps of the form $\A^1 \times_S X \to X$ are weak equivalences. To remedy this, we must perform Bousfield localization again.\\

Let $J$ consist of maps $\A^1 \times_S X \to X$ ranging over representatives $X$ for each isomorphism class in $\Sm/S$. We then localize with respect to $J$, which we denote ``$L_{\A^1}$'':
\[
\Spc_S^{\A^1} := L_{\A^1} L_{\Nis}\ \s\PShv(\Sm/S).
\]

We will also use the notation $\M_{\A^1} = \Spc_S^{\A^1}$, as in Robalo, to keep track of the fact that it was constructed via the use of model categories.

\begin{defn}  {\cite[Definition~3.54]{AE16}} Fibrant objects in $\M_{\A^1}$ are called \textit{$\A^1$-local spaces}. Moreover for any $X\in\s\PShv(\Sm/S)$, we have that $X$ is fibrant in $\M_{\A_1}$ if it
\begin{enumerate}
\item takes values in $\Kan$ (it is fibrant in the projective model structure on $\s\PShv(\Sm/S)$)
\item satisfies Nisnevich descent (it is fibrant in $\L_{\Nis}\s\PShv(\Sm/S)$)
\item $X(U) \to X(\A^1 \times_S U)$ is a weak equivalence in $\sSet$ for all $U\in\Sm/S$.
\end{enumerate}
\end{defn}

\section{Unstable Motivic Homotopy Theory Using $(\infty,1)$-Categories}\label{hs-infty}

We hope to create an $(\infty,1)$-category of Morel-Voevodsky spaces $\H(S)$, as in \cite{R15}, using $(\infty,1)$-categories in each step. Additionally, we would like to do so in such a way that $\H(S)$ is equivalent to the underlying $(\infty,1)$-category $\M_{\A^1}$.\\

We give two constructions of the unstable homotopy category $\H(S)$. The first can be found in \cite{BH17}, and relies on a category of finite-type schemes which is a small $(\infty,1)$-category. We will also see a universal construction, as given in \cite{R15}. Finally, we will demonstrate their equivalence, both with each other and with $\M_{\A^1}$.

\subsection{The Construction of the Unstable Homotopy Category} \label{hs-bh}

Let $\C$ be a small $(\infty,1)$-category with finite coproducts. We denote by $\PShv_\Sigma (\C) \subset \PShv(\C)$ the full subcategory of presheaves which map finite coproducts to finite products. This is known as the \textit{nonabliean derived $(\infty,1)$-category of $\C$}. We then obtain an adjunction
\[
\PShv_\Sigma(\C) \stackrel{\stackrel{L_\Sigma}{\leftarrow}}{\hookrightarrow} \PShv(C).
\]

We consider the topology on $\C$ generated by finite coproduct decompositions, and we denote $\Shv_\sqcup(\C)\subset \PShv(\C)$ the $\infty$-topos of sheaves for this topology.

\begin{defn} An $(\infty,1)$-category is called \textit{extensive} if it admits finite coproducts, if binary coproducts are disjoint, and if finite coproduct decompositions are stable under pullbacks.
\end{defn}

\begin{prop}{\cite[Lemma~2.4]{BH17}} Let $\C$ be a small extensive $(\infty,1)$-category. Then $\PShv_\Sigma (\C) \simeq \Shv_\sqcup (\C)$ (moreover $L_\Sigma$ is left exact and $\PShv_\Sigma (\C)$ is an $\infty$-topos).
\end{prop}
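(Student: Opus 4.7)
The plan is to unpack the sheaf condition associated with the coproduct topology and then to use extensivity to simplify the resulting \v{C}ech nerve. First, I would identify the generating covering families: by construction, they are families of the form $\{X_i \to X\}_{i \in I}$ with $I$ finite and $X \simeq \sqcup_{i \in I} X_i$, together with the empty covering of the initial object $\emptyset$.

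Next, I would apply extensivity to collapse the iterated fiber products appearing in \v{C}ech descent. For such a covering family, descent for a presheaf $F$ demands that the canonical map
\[
F(X) \longrightarrow \lim_{[n] \in \Delta} \prod_{(i_0, \ldots, i_n) \in I^{n+1}} F(X_{i_0} \times_X \cdots \times_X X_{i_n})
\]
be an equivalence. Disjointness of coproducts together with stability under pullback forces $X_i \times_X X_j \simeq \emptyset$ for $i \ne j$, while the fact that each inclusion $X_i \hookrightarrow X$ is a monomorphism gives $X_i \times_X X_i \simeq X_i$. Iterating, at each cosimplicial level only the diagonal tuples contribute a nonempty factor; using $F(\emptyset) \simeq \ast$ (which is the sheaf condition for the empty covering of $\emptyset$), each level reduces to $\prod_{i \in I} F(X_i)$ and the cosimplicial structure becomes constant. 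The limit is then $\prod_{i \in I} F(X_i)$, and the sheaf condition reads $F(X) \xrightarrow{\sim} \prod_{i \in I} F(X_i)$, i.e., $F \in \PShv_\Sigma(\C)$. The converse is immediate, giving $\PShv_\Sigma(\C) \simeq \Shv_\sqcup(\C)$.

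For the remaining assertions, I would invoke the general theory of $\infty$-topoi. Once $\PShv_\Sigma(\C)$ is identified with sheaves for a Grothendieck $(\infty,1)$-topology, Lurie's characterization of $\infty$-topoi as accessible left exact localizations of presheaf $\infty$-categories immediately yields both that $L_\Sigma$ is left exact and that $\PShv_\Sigma(\C)$ is an $\infty$-topos. The only residual check is that the coverage by finite coproduct decompositions really is a Grothendieck $(\infty,1)$-topology --- in particular pullback stability of covering families --- which is again furnished by extensivity.

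The main obstacle I anticipate is the \v{C}ech nerve collapse: one has to distribute iterated fiber products over iterated coproducts correctly (using all three parts of extensivity: existence of finite coproducts, disjointness, and pullback stability), then verify that the resulting cosimplicial diagram is essentially constant and conclude that its limit is simply the product. The tacit use of the empty-covering condition to force $F(\emptyset) \simeq \ast$ is a minor but crucial step that ensures we recover $\PShv_\Sigma(\C)$ on the nose rather than a slightly larger subcategory.
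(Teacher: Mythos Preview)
The paper does not supply its own proof of this proposition; it merely records the statement with a citation to \cite[Lemma~2.4]{BH17} and proceeds to use it. So there is nothing in the paper to compare your argument against directly.

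That said, your sketch is essentially the standard argument and is correct. A couple of remarks on the execution: the claim that each coproduct inclusion $X_i \hookrightarrow X$ is a monomorphism is part of what ``binary coproducts are disjoint'' means (it is not a separate consequence), so you are entitled to use $X_i \times_X X_i \simeq X_i$ directly. The step where the cosimplicial object becomes ``constant'' deserves one more sentence: after killing the off-diagonal factors via $F(\emptyset)\simeq\ast$, what remains at level $n$ is $\prod_{i} F(X_i)$, and the coface and codegeneracy maps restrict to the identity on this factor because the diagonal inclusions $X_i \to X_i \times_X \cdots \times_X X_i$ are equivalences; hence the totalization is $\prod_i F(X_i)$. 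Finally, your observation that pullback stability of the covering families is exactly the third clause of extensivity is the right way to see that the coproduct coverage is a genuine Grothendieck topology, after which the left exactness of $L_\Sigma$ and the $\infty$-topos conclusion follow from \cite[\S6.2.2]{HTT} as you indicate.
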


Let $S$ be a quasi-compact quasi-separated scheme. Then $\Sm^{ft}/S$, the category of finitely-presented smooth $S$-schemes, is extensive, so we have that $\PShv_\Sigma(\Sm^{ft}/s) = \Shv_\sqcup(\Sm^{ft}/S)$. Since coproduct decompositions are Nisnevich coverings, we have that
\[
\Shv_\Nis( \Sm^{ft}/S ) \subset \PShv_\Sigma (\Sm^{ft}/S).
\]
We let $\PShv_{\A^1} (\Sm^{ft}/S) \subset \PShv(\Sm^{ft}/S)$ be the full subcategory of $\A^1$-invariant presheaves. We can then define the motivic homotopy $(\infty,1)$-category as
\[
\H(S) = \Shv_\Nis (\Sm^{ft}/S) \cap \PShv_{\A^1} (\Sm^{ft}/S).
\]

This should be, by definition, the pullback along the full inclusions of $(\infty,1)$-categories
\begin{center}
\begin{tikzcd}
\H(S)\arrow[dr, phantom, "\lrcorner", very near start] \rar\dar & \Shv_\Nis (\Sm^{ft}/S)\dar \\
\PShv_{\A^1} (\Sm^{ft}/S) \rar & \PShv(\Sm^{ft} / S).
\end{tikzcd}
\end{center}

\subsection{A Universal Construction of $\H(S)$}\label{Robalo} \label{hs-r}
Let $\U\in\V\in\W$ be universes. We consider $S$ to be a Noetherian $\U$-scheme, and $\Sm^{ft}/S$ to be the category of smooth separated $\U$-small schemes of finite type. We then have that $\Sm^{ft}/S$ is $\V$-small.\\

We can take the nerve functor, and consider the $\V$-small $(\infty,1)$-category $N(\Sm^{ft}/S)$. With the Nisnevich topology, this becomes an $\infty$-site.\\

In order to ensure we have all ($\V$-small) colimits, we take the big $(\infty,1)$-category $\PShv(N(\Sm^{ft}/S)) = \Fun(N(\Sm^{ft}/S)^\op , \hatS)$, where $\hatS$ denotes the $(\infty,1)$-category of all $\V$-small $(\infty,1)$-spaces. This is exactly the free completion of $N(\Sm^{ft}/S)$ with all $\V$-small colimits.\\

We have some analogue of the Yoneda embedding, given as
\[
j: N(\Sm^{ft}/S) \hookrightarrow \PShv(N(\Sm^{ft}/S))
\]
where we identify each scheme with its image via $j$.\\

From [R, p.458] and [L, Prop 4.2.4.4], we can identify $\PShv(N(\Sm^{ft}/S)) \cong N((\s\PShv(\Sm^{ft}/S))^\circ)$, that is we have that $\PShv(N(\Sm^{ft}/S))$ is associated with the underlying $(\infty,1)$-category of $\s\PShv(\Sm^{ft}/S)$.\\

We can restrict to the objects in $\PShv(N(\Sm^{ft}/S))$ which are sheaves with respect to the Nisnevich topology. We then obtain an $\infty$-topos $\Shv_\Nis (\Sm^{ft}/S)$.\\

In an $\infty$-topos of sheaves, we often wish to \textit{hypercomplete}, which is a process that allows us to retain only those sheaves which satisfy descent with respect to hypercovers (in our case Nisnevisch hypercovers). This is some higher analog of our localization with respect to Nisnevich hypercovers. We then obtain
\[
\Shv_\Nis (\Sm^{ft}/S)^\hyp
\]
and finally we localize $\A^1$-invariant maps via the functor $\ell_{\A^1}: \Shv_\Nis (\Sm^{ft}/S)^\hyp \to \H(S)$. Altogether we get

\[
N(\Sm^{ft}/S) \xhookrightarrow{j} \PShv(N(\Sm^{ft}/S))\to \Shv_\Nis (\Sm^{ft}/S) \to \Shv_\Nis (\Sm^{ft}/S)^\hyp \to \H(S).
\]

\subsection{Equivalence of These Constructions} \label{hs-equiv}

Recall our two constructions from \cite{R15}:
\begin{center}
\begin{tabular}{ l l }
$\M_{\A^1} = L_{\A^1} L_{\Nis}\ \s\PShv(\Sm/S)$ & model category\\
$\H(S) = \ell_{\A^1} \Shv_\Nis (\Sm^{ft}/S)^\hyp$ & $(\infty,1)$-category\\
\end{tabular}
\end{center}

The fact that $\H(S)$ is really the underlying $(\infty,1)$-category of $\M_{\A^1}$ follows from our discussion of locally presentable categories in Section \ref{loc-pres-cat}, and the relevant results on localization in \cite[A.3.7]{HTT}.\\

We now need to demonstrate that the constructions from \cite{R15} and \cite{BH17} are equivalent, that is
\[
\H(S) = \ell_{\A^1} \Shv_\Nis (\Sm^{ft}/S)^\hyp \simeq \Shv_\Nis (\Sm^{ft}/S) \cap \PShv_{\A^1} (\Sm^{ft}/S)
\]
is an equivalence of infinity categories. Let $S$ be a Noetherian scheme of finite Krull dimension. Then by \cite[2.2.5]{DAG}, we have that $\Shv_{\Nis}(\Sm^{ft} / S)$ is hypercomplete, i.e. $\Shv_{\Nis}(\Sm^{ft} / S)^\hyp \simeq \Shv_{\Nis}(\Sm^{ft} / S)$. We then look at the diagrams\\
\begin{table}[h]
\begin{center}
\begin{tabular}{c c}

\begin{tikzcd}
\PShv(\Sm^{ft}/S) \rar[shift left, "L_{\A^1}"]\dar[shift right,"L_\Sigma" left]				& \PShv_{\A^1} (\Sm^{ft} / S)\ar[ddd, shift left]	\\
\PShv_\Sigma (\Sm^{ft}/S)\dar[equal]			&					\\
\Shv_{\sqcup} (\Sm^{ft} / S)\dar[shift right, "\ell_\Nis" left]				& \\
\Shv_{\Nis}(\Sm^{ft} / S)\rar["\ell_{\A^1}"]				& 		\H(S) 
\end{tikzcd}
&
\begin{tikzcd}
\PShv(\Sm^{ft}/S)				& \PShv_{\A^1} (\Sm^{ft} / S)	\lar[hook, shift left]	\\
\PShv_\Sigma (\Sm^{ft}/S)\uar[shift right,hook]\dar[equal]			&					\\
\Shv_{\sqcup} (\Sm^{ft} / S)			& \\
\Shv_{\Nis}(\Sm^{ft} / S)\uar[hook,shift right]				& 		\H(S) \ar[uuu, hook, shift left]\lar[hook,shift right]
\end{tikzcd}
\end{tabular}
\end{center}\caption*{Constructions of $\H(S)$}
\end{table}

There are a few things to observe here. First, we see that the arrows on the left diagram are the left adjoints to their corresponding arrows on the right diagram. Furthermore, the diagram on the right is a pullback square (taken in the category of $(\infty,1)$-categories) by the construction in \cite{BH17}. Additionally, note that the arrow $\ell_{\A^1}$ is exactly the localization defined in \cite{R15}, when we take our universes to be sufficiently small. Thus we conclude that 
\[
\H(S) = \ell_{\A^1} \Shv_\Nis (\Sm^{ft}/S)^\hyp \simeq \Shv_\Nis (\Sm^{ft}/S) \cap \PShv_{\A^1} (\Sm^{ft}/S)
\]
is an equivalence of $(\infty,1)$-categories.\\

Finally, we conclude with a nice fact about the above diagram (which we attribute to Tom Bachmann):

\begin{prop} In the figure above, the left square is a pushout of $(\infty,1)$-categories, and the right square is a pullback of $(\infty,1)$-categories.
\end{prop}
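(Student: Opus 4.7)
The plan is to derive the pushout statement from the pullback statement already established in \cite{BH17}, by invoking the equivalence $\Pr^L \simeq (\Pr^R)^{\op}$ of presentable $(\infty,1)$-categories under passage to right adjoints \cite[Cor.~5.5.3.4]{HTT}. Under this equivalence, colimit diagrams in $\Pr^L$ correspond to limit diagrams in $\Pr^R$, and since the inclusion $\Pr^R \hookrightarrow \widehat{\Cat}_\infty$ preserves small limits, pullbacks in $\Pr^R$ agree with pullbacks in the ambient $(\infty,1)$-category of $(\infty,1)$-categories.

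First I would verify that all four vertices of the square — $\PShv(\Sm^{ft}/S)$, $\PShv_{\A^1}(\Sm^{ft}/S)$, $\Shv_{\Nis}(\Sm^{ft}/S)$, and $\H(S)$ — are presentable $(\infty,1)$-categories. This is immediate from Section~\ref{loc-pres-cat}: one is a presheaf $(\infty,1)$-category on a small $(\infty,1)$-category, and each of the others is an accessible reflective (left Bousfield) localization of such. I would then confirm that every arrow in the left square is a left adjoint whose right adjoint is the corresponding arrow in the right square; the paper has already recorded this identification for $L_{\A^1}$, $\ell_{\A^1}$, $L_\Sigma$, and the inclusion composites.

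Given this setup, the argument is mechanical. The right square is a pullback of $(\infty,1)$-categories by the construction in \cite{BH17}; since its vertices are presentable and its arrows are right adjoints of colimit-preserving functors between presentable categories, it is also a pullback in $\Pr^R$. Applying the duality $\Pr^L \simeq (\Pr^R)^{\op}$ passes this limit diagram to a colimit diagram in $\Pr^L$, and by our identification of the arrows as adjoint pairs the resulting colimit diagram is precisely the left square.

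The main obstacle is a framing issue rather than a mathematical one: the phrase ``pushout of $(\infty,1)$-categories'' must be interpreted as pushout in $\Pr^L$, since the forgetful functor $\Pr^L \hookrightarrow \widehat{\Cat}_\infty$ preserves limits but not colimits in general. One therefore needs to be careful that $\ell_{\A^1}$, $L_{\A^1}$, and $L_\Sigma$ are genuine left adjoints of reflective localizations (not arbitrary functors), so that the left square actually lives in $\Pr^L$ and the duality applies. Once this is articulated, no additional computation is required.
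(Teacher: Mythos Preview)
Your proposal is correct and follows essentially the same route as the paper: take the right square as a pullback by construction, observe it remains a pullback in $\Pr^R$, and transport it through the anti-equivalence $\Pr^R \simeq (\Pr^L)^{\op}$ to obtain the left square as a pushout in $\Pr^L$. Your write-up is in fact more careful than the paper's, since you explicitly justify why the pullback computed in the ambient $(\infty,1)$-category of $(\infty,1)$-categories agrees with the pullback in $\Pr^R$, and you flag that the resulting pushout is to be read in $\Pr^L$ rather than in $\widehat{\Cat}_\infty$.
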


\begin{proof}
We can take the right square to be a pullback, by definition. This is still the case if we restrict to $\Pr^R$, the category of locally presentable $(\infty,1)$-categories with right adjoints as morphisms. In this category, the right square is a pullback.\\

There is additionally an anti-equivalence of categories $\Pr^R \xrightarrow{\cong} (\Pr^L)^\op$, which send right adjoints to left adjoints. Via this map, we can see that the left square is a pushout.
\end{proof}

Thus we can express the unstable  $\H(S)$ as a fibered coproduct in $\Pr^L$:

\[
\H(S) \simeq \PShv_{\A^1} (\Sm^{ft} / S) \amalg_{\PShv(\Sm^{ft}/S)} \Shv_{\Nis}(\Sm^{ft} / S).
\]

\end{document}